\numberwithin{equation}{section}
\numberwithin{figure}{section}
\theoremstyle{plain}
\newtheorem{lemma}{Lemma}[section]
\newtheorem{theorem}[lemma]{Theorem}
\theoremstyle{definition}
\newtheorem{definition}[lemma]{Definition}
\newcommand{\Rbb}{\mathbb{R}}
\DeclareSymbolFont{bbold}{U}{bbold}{m}{n}
\DeclareSymbolFontAlphabet{\mathbbold}{bbold}
\renewcommand{\P}{\mathbb{P}}
\newcommand{\F}{\mathcal{F}}
\newcommand{\Wcal}{\mathcal{W}}
\newcommand{\Tfrag}{\tau_{\textnormal{frag}}}
\newcommand{\Tcoal}{\tau_{\textnormal{coal}}}
\DeclareMathOperator{\BM}{BM}
\let\emptyset\varnothing
\begin{document}

    \title[Maximal Germ Couplings of BM with Drift]{Constructing Maximal Germ Couplings~\\of Brownian Motions with Drift}
	
	\author{Sebastian Hummel and Adam Quinn Jaffe}
	
	\maketitle

        \begin{abstract}
            Consider all the possible ways of coupling together two Brownian motions with the same starting position but with different drifts onto the same probability space.
            It is known that there exist couplings which make these processes agree for some random, positive, maximal initial length of time.
            Presently, we provide an explicit, elementary construction of such couplings.
        \end{abstract}


\section{Introduction}
    
    Consider all the possible ways of coupling together two Brownian motions with the same starting position but with different drifts onto the same probability space.
    Let us say that a coupling under which the two processes exactly agree for a (possibly random) positive initial length of time is called a \textit{germ coupling}, and that a coupling under which they exactly agree for the maximal possible initial length of time (in the sense of stochastic domination) is called a \textit{maximal germ coupling}.
    Recent works have shown the existence of couplings with some interesting properties: that germ couplings exist \cite{CouplingDuality}, that maximal germ couplings exist and that the distribution of the length of time on which they agree has a known form \cite{MEXIT, Vollering}, and that there exist simultaneous non-maximal germ couplings for Brownian motions with infinitely many different drifts \cite{BusaniSH,SeppalainenSorensenSH}.
    The goal of the current work is to show that, in fact, one can simultaneously construct maximal germ couplings of Brownian motions with all possible drifts, and this can be done in an explicit and elementary way. 
            
    Before getting into the precise problem statement, our main results, and the motivation from related literature, let us describe the basic element of our constructions:
    Suppose that one is given a Brownian motion $B = (B(t): t\ge 0)$ started from 0 and with drift 0.
    Then, for $\theta\in\Rbb$, draw the line $\ell^{\theta} \coloneqq \{(t,\frac{1}{2}\theta t): t\ge0\}$, and find the last time that $B$ intersects $\ell^{\theta}$.
    Now define a new process $B^{\theta}$ that is equal to $B$ before this time and to the reflection of $B$ across $\ell^{\theta}$ after this time.
    It turns out that this new process is a Brownian motion started from 0 and with drift $\theta$, and that it is maximally germ coupled to $B$.
    See Figure~\ref{fig:basic} for an illustration of this construction.
    
    There are several things to note about the basic construction.
    First, the construction yields a maximal germ coupling and not just a germ coupling.
    Second, the only randomness required is the standard Brownian motion $B$, since we only ever apply deterministic operations to the random path.
    Third, the construction holds simultaneously for all $\theta\in\Rbb$.
    Thus, this simple geometric construction can be applied in clever ways to answer very many questions about maximal germ couplings.

    \begin{figure}
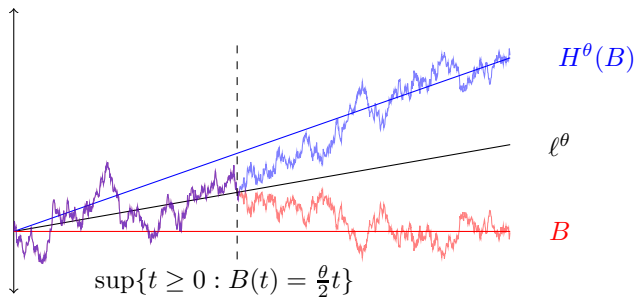

        \begin{center}
            \include{basic_fig}
            \vspace{-0.25cm}
        \end{center}
        \caption{Reflection of a standard Brownian motion $B$ across its last visit to the line $\ell^{\theta}$ yields a Brownian motion $H^{\theta}(B)$ with drift $\theta$.}
        \label{fig:basic}
    \end{figure}

\section{Results}
    
    We begin by establishing our  notation.
    That is, we write $W\coloneqq C([0,\infty);\Rbb)$ for the usual space of continuous real-valued paths endowed with the topology of uniform convergence on compact sets, and we write $\Wcal$ for its Borel $\sigma$-algebra.
    For $\theta\in\Rbb$, write $\BM^{\theta}$ for the probability measure on $(W,\Wcal)$ which is the law of a Brownian motion started at 0 and with drift $\theta$.
    For convenience, we write $\BM \coloneqq \BM^{0}$ for the law of a standard Brownian motion.
    We also write $P$ for a probability measure on an auxiliary probability space supporting a standard Gaussian random variable $Z$.

    Our main results require the notion of fragmentation time.
    That is, for any pair of paths $w_1,w_2\in W$, we define their \textit{fragmentation time} via
    \begin{equation*}
	\Tfrag(w_1,w_2)\coloneqq\inf\{t\ge 0: w_1(t)\neq w_2(t)\}.
    \end{equation*}
    Notice that $\Tfrag:W\times W\to \Rbb$ is a Borel measurable map.

    Next we introduce a few concepts related to coupling.
    That is, for a pair of probability measures $\mu_1,\mu_2$ on $(W,\Wcal)$, a \textit{coupling} of $\mu_1,\mu_2$ is a pair of stochastic processes $X_1,X_2$ defined on a common probability space $(\Omega,\F,\P)$ in such a way that $X_1$ has distribution $\mu_1$ and $X_2$ has distribution $\mu_2$.
    Of course, we are primarily interested couplings of $B,B^{\theta}$ of $\BM,\BM^{\theta}$ which posses further notable structure.

    One structural property at the core of what is to follow is that of a germ coupling:
    A coupling $X_1,X_2$ of $\mu_1,\mu_2$ onto a probability space $(\Omega,\F,\P)$ is called a \textit{germ coupling} of $\mu_1,\mu_2$ if we have $\P(\Tfrag(X_1,X_2)>0) = 1$; alternatively, we say that $X_1,X_2$ are \textit{germ coupled (under $\P$)}.
    In words, a germ coupling is a coupling under which the two stochastic processes are exactly equal for some positive initial length of time.
    
   	Next, we consider germ couplings of Brownian motion that maximize the fragmentation time (in  the sense of stochastic domination). Using the definition of the total variation distance, it can be shown that for any coupling $B,B^{\theta}$ of $\BM,\BM^{\theta}$ onto a probability space $(\Omega,\F,\P)$, we have
	\begin{equation}
	\P(\Tfrag(B,B^{\theta})\le t) \ge P(4\theta^{-2}Z^2\le t)\label{eq:maxgerm}
	\end{equation}
    for all $t\ge 0$; see \cite{MEXIT} for details. 
    Let us say that a coupling satisfying~\eqref{eq:maxgerm} with equality for all $t\ge 0$ is called a \textit{maximal germ coupling}; alternatively, we say that $B,B^{\theta}$ are \textit{maximally germ coupled (under $\P$)}.
    In words, the distribution of $\Tfrag(B,B^{\theta})$ under any coupling is stochastically dominated by the distribution of $4\theta^{-2}Z^2$, and a maximal germ coupling is a coupling under which $\Tfrag(B,B^{\theta})$ and $4\theta^{-2}Z^2$ have the same distribution.

    That the geometric procedure given in the introduction indeed yields a maximal germ coupling is shown in our first result.
    To state it, we introduce for $\theta\in \Rbb$, the mapping of a path $w$ to the path that is $w$ up to the time of the last visit of $w$ to the line $\ell^{\theta}\coloneqq\{(t,\frac{1}{2}\theta t): t\ge 0\}$, and the reflection of~$w$ across $\ell^{\theta}$ after that last visit.
    
    \begin{definition}[Reflection across $\ell^{\theta}$ after last visit]
    For $\theta\in\Rbb$, let $H^{\theta}:W\to W$ denote the map defined via
	\begin{equation*}
		(H^{\theta}(w))(t) \coloneqq \begin{cases}
			w(t), &\textrm{ if } t \le \sup\{s\ge 0: w(s) = \frac{1}{2}\theta s\}, \\
			\theta t - w(t), &\textrm{ else.}
		\end{cases}
	\end{equation*}
    \end{definition}

    As before, see Figure~\ref{fig:basic} for an illustration.
    Now we come to the result:

    \begin{theorem}[Explicit construction of maximal Brownian germ coupling]\label{thm:main}
        If $(\Omega,\F,\P)$ is a probability space on which is defined a process $B$ 
        with distribution $\BM$, then, for all $\theta\in\Rbb$, the process $B^{\theta} := H^{\theta}(B)$ has distribution $\BM^{\theta}$, and $B,B^{\theta}$ are maximally germ coupled under $\P$.
    \end{theorem}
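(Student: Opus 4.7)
The plan is to first identify $\Tfrag(B, B^{\theta})$ with the last visit $L := \sup\{s \ge 0 : B(s) = \tfrac{1}{2}\theta s\}$ and compute its law, and then to prove the distributional identity $B^{\theta} \sim \BM^{\theta}$ by comparing the last-visit path decompositions of $\BM^{-\theta/2}$ and $\BM^{+\theta/2}$; maximality of the germ coupling follows at once from~\eqref{eq:maxgerm}.

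By construction $B = B^{\theta}$ on $[0, L]$, and for $t > L$ we have $B^{\theta}(t) - B(t) = \theta t - 2B(t) \ne 0$ since $B(t) \ne \tfrac{1}{2}\theta t$. Hence $\Tfrag(B, B^{\theta}) = L$ almost surely. A direct computation, conditioning on $B(t)$ and using the classical identity $P\bigl(\sup_{u \ge 0}(\tilde B(u) - bu) > a\bigr) = e^{-2ab}$ for standard BM $\tilde B$ and $a, b > 0$, yields $P(L > t) = P(|Z| > \tfrac{1}{2}|\theta|\sqrt{t})$, so $L \stackrel{d}{=} 4\theta^{-2} Z^{2}$. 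Granted the identity $B^{\theta} \sim \BM^{\theta}$, this delivers equality in~\eqref{eq:maxgerm}, and so the coupling is maximal.

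For the identity $B^{\theta} \sim \BM^{\theta}$, the cases $\theta = 0$ (trivial, as $H^{0}(B) = B$) and $\theta < 0$ (apply the $\theta > 0$ case to $-B$) reduce to $\theta > 0$, which I henceforth assume. Set $Y(t) := B(t) - \tfrac{1}{2}\theta t \sim \BM^{-\theta/2}$ and $\tilde Y(t) := B^{\theta}(t) - \tfrac{1}{2}\theta t$. Then $L$ is the last zero of $Y$, and the definition of $H^{\theta}$ gives $\tilde Y = Y$ on $[0, L]$ and $\tilde Y = -Y$ on $(L, \infty)$. Writing $\mu := \theta/2 > 0$, it suffices to show that reflecting a $\BM^{-\mu}$ at $0$ after its last zero produces a $\BM^{+\mu}$.

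I would establish this through the last-visit path decomposition of a drifted Brownian motion. For $W \sim \BM^{-\mu}$ with last zero $L_{W}$, the joint law of the triple $\bigl(L_{W},\, W|_{[0, L_{W}]},\, W(L_{W} + \cdot)\bigr)$ admits the following description: the marginal of $L_{W}$ coincides with the law of $L$ computed above; conditional on $L_{W} = \ell$, the pre-$L_{W}$ path is a standard Brownian bridge from $0$ to $0$ over $[0, \ell]$ (the drift is absorbed because the Girsanov factor $\exp(-\mu W(\ell) - \tfrac{1}{2} \mu^{2} \ell)$ is deterministic when $W(\ell) = 0$); and conditional on $L_{W}$ and the pre-$L_{W}$ path, the post-$L_{W}$ path has some law $Q_{\ell}$ on paths starting at $0$ that do not return to $0$. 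By the symmetry $W \mapsto -W$ (which swaps $\BM^{-\mu}$ with $\BM^{+\mu}$ and preserves last zeros), the analogous decomposition of $\BM^{+\mu}$ has the same marginal for the last zero, the same Brownian bridge pre-path, and post-path law equal to the pushforward of $Q_{\ell}$ under path negation. The triple for $\tilde Y$ is $(L,\, Y|_{[0, L]},\, -Y(L + \cdot))$, which matches the $\BM^{+\mu}$-decomposition term by term; hence $\tilde Y \sim \BM^{+\mu}$ and therefore $B^{\theta} \sim \BM^{\theta}$.

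The main technical hurdle is the rigorous justification of the post-$L_{W}$ conditional law in the decomposition above, since $L_{W}$ is not a stopping time with respect to the natural filtration of $W$; this is most cleanly handled either by approximating $L_{W}$ through a decreasing sequence of stopping times and passing to the limit, or by invoking a Williams-type path decomposition for drifted Brownian motion directly.
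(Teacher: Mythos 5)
Your proposal is correct in outline but takes a genuinely different route from the paper, and the difference is exactly the one that motivated the paper's existence. You identify $\Tfrag(B,B^{\theta})$ with the last visit $L$, compute its law by conditioning on $B(t)$ and using the exponential law of the all-time supremum of a negatively-drifted Brownian motion, and then prove $B^{\theta}\sim\BM^{\theta}$ via the Williams-type last-exit path decomposition of $\BM^{-\theta/2}$, matching the triple (last zero, Brownian-bridge pre-path, post-path) term by term under the symmetry $W\mapsto-W$. This is essentially the ``Williams'' route taken in \cite[Thm.~29]{MEXIT}, which the present paper explicitly set out to replace with something more elementary. The paper instead applies the time-inversion map $I(w)(t)=tw(1/t)$, which swaps drift and starting point (Lemma~\ref{lem:inversion-law}) and turns the last visit of $B$ to $\ell^{\theta}$ into the \emph{first} hitting of level $\theta/2$ by $I(B)$ (Lemma~\ref{lem:coal-frag-inversion}); since a first hitting time \emph{is} a stopping time, the reflected process $I(H^{\theta}(B))$ is recognized as a classical mirror coupling, and the strong Markov property and reflection principle finish the job. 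The technical hurdle you correctly flag---that $L$ is not a stopping time, so the conditional law of the post-$L$ path is delicate---is precisely the issue the inversion trick dissolves: it shifts the nonadapted random time to one where the usual Markov machinery applies directly. Your appeal to the Williams decomposition can be made rigorous, so your argument is salvageable, but it buys you nothing over the inversion approach and costs a significantly heavier piece of machinery; if you wanted to run your version honestly, you would need to either cite a precise statement of the last-exit decomposition for $\BM^{-\mu}$ (with the claimed independence of the post-$L$ path from the pre-$L$ path given $L$, which is not just the Markov property at time $L$) or reconstruct it via a decreasing sequence of stopping times, neither of which is short.
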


    Let us emphasize that this construction yields a maximal germ coupling of $\BM,\BM^{\theta}$ \emph{simultaneously} for all $\theta\in\Rbb$.
    In particular, it is possible to consider the entire family of processes $\{H^{\theta}(B)\}_{\theta\in\Rbb}$, 
    where $B$ 
    is any $\BM$ on some probability space $(\Omega,\F,\P)$;
    we refer to this ensemble as the \textit{Brownian bouquet} since it contains $B$ as its ``stem'' and $\{H^{\theta}(B)\}_{\theta\neq 0}$ as ``branches'' which, at various random times, fragment from the stem.
    See Figure~\ref{fig:bouquet} for an illustration.

    \begin{figure}
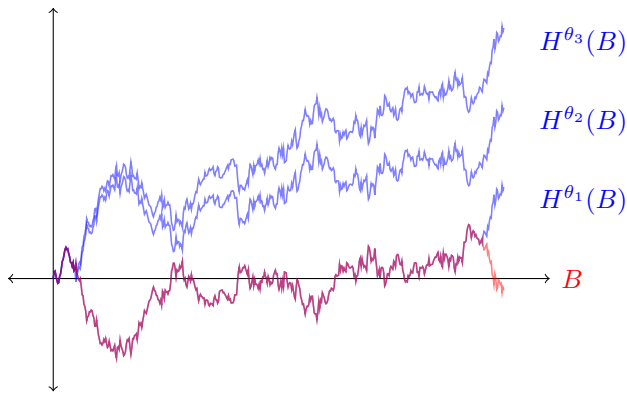

        \begin{center}
            \include{bouquet_fig}
        \end{center}
        \vspace{-0.25cm}
        \caption{The stem and a few branches of the Brownian bouquet.
        That is, we plot a Brownian motion $B$, as well as $H^{\theta}(B)$ for $\theta$ taking on some values $0<\theta_1<\theta_2<\theta_3$.
        The coloring of the stem encodes the ``density'' of how many of the pictured branches have fragmented.}
        \label{fig:bouquet}
    \end{figure}

    A natural next question concerns the stochastic process $\{\Tfrag(B,H^{\theta}(B)): \theta \ge 0)$ which we regard as the ``amount'' of branches not yet fragmented from the stem.
    It turns out that we can describe this process exactly:

    \begin{theorem}[Fragmentation time process]\label{thm:bouquet-branch-rate}
    If $(\Omega,\F,\P)$ is a probability space on which is defined a standard Brownian motion, then the process $(\Tfrag(B,H^{\theta}(B)): \theta \ge 0)$ has the distribution of the reciprocal of a $\frac{1}{2}$-stable subordinator.
    In particular, it is a non-increasing, pure-jump process.
    \end{theorem}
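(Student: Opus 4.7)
The plan is first to identify the fragmentation time explicitly: I claim that
\begin{equation*}
\Tfrag(B,H^{\theta}(B))=T(\theta):=\sup\Bigl\{t\ge 0:B(t)=\tfrac{1}{2}\theta t\Bigr\}
\end{equation*}
almost surely for every $\theta\ge 0$ (with $T(0)=\infty$ by convention). Indeed, by definition of $H^{\theta}$ the two paths coincide on $[0,T(\theta)]$, while for $t>T(\theta)$ one has $B(t)\neq\tfrac{1}{2}\theta t$, so that $H^{\theta}(B)(t)=\theta t-B(t)\neq B(t)$; thus they separate immediately after $T(\theta)$ and never re-meet on a set of positive measure.

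Next, I would apply Brownian time inversion: the process $\tilde B(s):=sB(1/s)$ (with $\tilde B(0):=0$) is again a standard Brownian motion, and the relation $B(t)=\tfrac{1}{2}\theta t$ is equivalent to $\tilde B(1/t)=\tfrac{1}{2}\theta$. Substituting $s=1/t$ turns the supremum defining $T(\theta)$ into an infimum, giving
\begin{equation*}
T(\theta)=\frac{1}{\tau(\theta/2)},\qquad \tau(a):=\inf\{s>0:\tilde B(s)=a\}.
\end{equation*}
This reduces the problem from a geometric question about last visits to the slanted line $\ell^{\theta}$ to the classical question about first passages of a Brownian motion to a horizontal level.

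To finish, I would invoke the standard fact (e.g.\ via the strong Markov property combined with Brownian scaling) that the first-passage process $(\tau(a):a\ge 0)$ of a standard Brownian motion is a $\tfrac{1}{2}$-stable subordinator. A deterministic time change $a=\theta/2$ yields another $\tfrac{1}{2}$-stable subordinator (up to a change of the Lévy measure by a constant factor), and therefore
\begin{equation*}
(\Tfrag(B,H^{\theta}(B)):\theta\ge 0) = \bigl(1/\tau(\theta/2):\theta\ge 0\bigr)
\end{equation*}
is the reciprocal of such a subordinator. The concluding assertions are then automatic: any subordinator is non-decreasing and pure-jump, and reciprocation exchanges non-decreasing with non-increasing while preserving the pure-jump structure.

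I do not anticipate a serious obstacle; the only delicate point is the identification of $\Tfrag$ with $T(\theta)$ on a single almost sure event (one must check that the set of $t$ with $B(t)=\tfrac{1}{2}\theta t$ has $\sup$ that is actually attained as a limit and that $B$ crosses this line transversally after its last visit, which is standard). The substantive idea is simply that Brownian time inversion is the right coordinate change: it maps the pencil of lines $\{\ell^{\theta}\}_{\theta\ge 0}$ to the pencil of horizontal levels $\{\tfrac{1}{2}\theta\}_{\theta\ge 0}$, and last visits to first visits.
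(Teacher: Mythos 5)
Your proposal is correct and follows essentially the same route as the paper: identify $\Tfrag(B,H^{\theta}(B))$ with the last visit time $\sup\{t\ge 0:B(t)=\tfrac{1}{2}\theta t\}$, convert it via Brownian time inversion into the first-passage time of a standard Brownian motion to the level $\tfrac{1}{2}\theta$, and invoke the classical fact that the first-passage process is a $\tfrac{1}{2}$-stable subordinator. The only (harmless) difference is that you verify the identification of $\Tfrag$ by a direct pathwise argument rather than quoting it from the proof of Theorem~\ref{thm:main}, where the paper establishes it with a slightly more elaborate case analysis.
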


    Our last result addresses the question of how to simulate the pair $B,H^{\theta}(B)$ on a finite interval of time; this is not entirely straightforward, since each point of $H^{\theta}(B)$ depends on the entire trajectory of $B$.
    Nonetheless, we provide in Algorithm~\ref{alg:germ-transform} a deterministic procedure, \textbf{GermTransform}$_{\theta}$, which, as we will soon show, maps suitable random inputs into a maximal germ coupling on a finite interval of time.

    To state this precisely, we need to introduce some modifications of our notation for the finite-time setting.
    So, let us fix $T>0$.
    We write $W(T)\coloneqq C([0,T];\Rbb)$ for the space of continuous real-valued paths endowed with the topology of uniform convergence, and we write $\Wcal(T)$ for its Borel $\sigma$-algebra.
    As before, for $\theta\in\Rbb$ we write $\BM^{\theta}(T)$ for the probability measure on $(W(T),\Wcal(T))$ which is the law of a Brownian motion started at 0 and with drift $\theta$; equivalently, $\BM^{\theta}(T)$ is the restriction of $\BM^{\theta}$ from $(W,\Wcal)$ to $(W(T), \Wcal(T))$.
    We also write $\BM(T) \coloneqq \BM^{0}(T)$ for convenience.

    \begin{theorem}[Simulating maximum Brownian germ coupling]\label{thm:alg}
    Fix $T>0$, and suppose that $(\Omega,\F,\P)$ is a probability space on which is defined a process $B = (B(t): 0 \le t \le T)$ with distribution $\BM(T)$ and a random variable $U$ with distribution $\textnormal{Uniform}[0,1]$, such that $B$ and $U$ are independent.
    Then, the process $B^{\theta}\coloneqq \textnormal{\textbf{GermTransform}}_{\theta}(B,U)$ defined via Algorithm~\ref{alg:germ-transform} has distribution $\BM^{\theta}(T)$, and $B,B^{\theta}$ are maximally germ coupled under $\P$, for all $\theta\ge 0$.
    \end{theorem}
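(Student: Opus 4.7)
My plan is to reduce Theorem~\ref{thm:alg} to Theorem~\ref{thm:main} via an extension argument. Fix $\theta\ge 0$. On an enlarged probability space, I would extend $B$ to $[0,\infty)$ by concatenating it with an independent standard Brownian motion started at $B(T)$, obtaining $\tilde B \sim \BM$ with $\tilde B|_{[0,T]}=B$. Theorem~\ref{thm:main} then gives that $H^\theta(\tilde B) \sim \BM^\theta$ and is maximally germ coupled with $\tilde B$. Restricting to $[0,T]$ yields a process $\tilde B^\theta \coloneqq H^\theta(\tilde B)|_{[0,T]}$ of law $\BM^\theta(T)$, and since $\Tfrag(B,\tilde B^\theta) = \Tfrag(\tilde B, H^\theta(\tilde B)) \wedge T$, $B$ and $\tilde B^\theta$ are maximally germ coupled on $[0,T]$.

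Next, I would express $\tilde B^\theta$ as a measurable function of $B|_{[0,T]}$ together with a single binary random variable. Let $\tau\coloneqq \sup\{s\ge 0: \tilde B(s)=\tfrac{1}{2}\theta s\}$ and $L\coloneqq \sup\{s\in[0,T]: B(s)=\tfrac{1}{2}\theta s\}$; both are well-defined since the defining sets contain $s=0$ and are closed by continuity, and for $\theta>0$ the former is a.s.\ finite because $\tilde B(\cdot)-\tfrac{1}{2}\theta(\cdot)$ is a Brownian motion with drift $-\tfrac{1}{2}\theta$. On $\{\tau\ge T\}$ no reflection occurs within $[0,T]$, so $\tilde B^\theta=B$; on $\{\tau<T\}$ one has $\tau=L$ (the global last hit, being in $[0,T]$, must equal the last hit within $[0,T]$), hence $\tilde B^\theta(t)=B(t)$ for $t\le L$ and $\tilde B^\theta(t)=\theta t-B(t)$ for $L<t\le T$. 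By the Markov property, conditional on $B|_{[0,T]}$ the event $\{\tau\ge T\}$ has probability depending only on $B(T)$; a standard hitting-probability computation for Brownian motion with drift $-\tfrac{1}{2}\theta$ identifies this as $p(B(T))\coloneqq \exp\bigl(-\theta(\tfrac{1}{2}\theta T - B(T))_+\bigr)$ for $\theta>0$, and as $1$ for $\theta=0$.

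The final step is to verify that $\textnormal{\textbf{GermTransform}}_\theta(B,U)$ implements this conditional description: from $B$ it computes $L$ and the threshold $p(B(T))$, then uses the independent $U$ to sample a $\mathrm{Bernoulli}(p(B(T)))$ variable---returning $B$ on the event $\{U\le p(B(T))\}$ (which plays the role of ``$\tau\ge T$'') and the reflection of $B$ across $\ell^\theta$ after time $L$ on the complementary event. Because $U$ is independent of $B$, this procedure samples from the correct conditional law, so $(B,B^\theta) \overset{d}{=} (B,\tilde B^\theta)$ as random elements of $W(T)\times W(T)$. The claims $B^\theta\sim \BM^\theta(T)$ and maximal germ coupling of $B,B^\theta$ then transfer immediately from the corresponding properties of $(B,\tilde B^\theta)$; the case $\theta=0$ is trivial since $H^0$ is the identity.

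The main obstacle is matching the steps performed by Algorithm~\ref{alg:germ-transform} line-by-line with the above recipe, in particular checking that the threshold computed by the algorithm coincides with $p(B(T))$ and that the reflection it applies is correctly pivoted at the last hit $L$ rather than at some other stopping time. The identification $\tau=L$ on $\{\tau<T\}$ and the hitting-probability computation are routine, but some care is needed for the measurability of $L$ as a function of $B$ (standard for last-exit times of continuous processes from closed sets) and for the boundary case $\tau=T$, which has probability zero and can be absorbed into either branch of the dichotomy.
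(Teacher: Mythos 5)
Your proposal is correct and follows essentially the same route as the paper: reduce to Theorem~\ref{thm:main} on the full half-line, condition on whether the last visit to $\ell^{\theta}$ falls before or after time $T$, compute the conditional probability of that event given $B|_{[0,T]}$ via a standard Brownian hitting-probability computation (your $\exp(-\theta(\tfrac12\theta T - B(T))_+)$ is exactly the paper's $\min\{\exp(\theta B(T)-\tfrac12\theta^2 T),1\}$), and then check that the algorithm implements the resulting conditional sampling with the auxiliary $U$. The only small imprecision is the identity $\Tfrag(B,\tilde B^{\theta}) = \Tfrag(\tilde B,H^{\theta}(\tilde B)) \wedge T$: when the fragmentation time exceeds $T$ the restricted paths agree on all of $[0,T]$, so by the infimum-over-empty-set convention the left side is $\infty$ rather than $T$, but this does not affect the distributional conclusion you need.
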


    We regard this result as a sort of ``path-wise Girsanov transform'' which has the additional property of preserving the largest possible initial segment of the Brownian path.
    Indeed, notice the conspicuous appearance of the Radon-Nikod\'ym derivative
    \begin{equation*}
    \frac{d\BM^{\theta}(T)}{d\BM(T)}(w) = \exp\left(\theta w(T)-\frac{1}{2}\theta^2 T\right)
    \end{equation*}
    in line 5 of Algorithm~\ref{alg:germ-transform}.
  
    \begin{algorithm}[t]
	\caption{A deterministic transformation for finite paths. By Theorem~\ref{thm:alg}, if $B = (B(t): 0 \le t \le T)$ is a standard Brownian motion and $U$ is an independent random variable uniformly distributed on $[0,1]$, then \textbf{GermTransform}$_{\theta}(B,U)$ outputs a Brownian motion with drift $\theta$ which is maximally germ coupled to $B$.
    Note that the algorithm requires only one pass through the elements of $B$.}\label{alg:germ-transform}
	
	\begin{algorithmic}[1]
            
		\Procedure{\textbf{GermTransform}$_{\theta}$}{$w,u$}
		  \State \textbf{input:} finite path $w=(w(t): 0 \le t \le T)$ and real number $u\in[0,1]$
		\State \textbf{output:} transformed finite path $(w^{\theta}(t): 0 \le t\le T)$

            \State \,
            
            \If{$u \le \exp(\theta w(T) - \frac{1}{2}\theta^2T)$}
                \State $w^{\theta}\leftarrow w$
            \Else
                \State $t\leftarrow T$
                \While{$w(t) < \frac{1}{2}\theta t$}
                    \State $w^{\theta}(t) \leftarrow \theta t - w(t)$
                    \State decrement $t$
                \EndWhile
                \While{$t\ge 0$}
                    \State $w^{\theta}(t) \leftarrow w(t)$
                    \State decrement $t$
                \EndWhile
            \EndIf
            
            \State \textbf{return} $w^{\theta}$
		\EndProcedure
	\end{algorithmic}
    \end{algorithm}

\section{Related Literature}

    A very classical problem in the theory of stochastic processes is that of \textit{tail coupling}, wherein one asks whether two copies of the same processes started from different starting points can be coupled to be eventually equal almost surely.
    Following the development of a sufficiently general theory \cite{Griffeath, Pitman, Goldstein}, many authors have refined our understanding of the tail coupling problem for certain classes of processes of interest: diffusions \cite{BanerjeeKendall, BenArousCranstonKendall}, integral functionals of Brownian motions and diffusions \cite{KendallArea, KendallPrice}, Brownian motion on Riemannian manifolds \cite{KendallRicci, KuwadaSturm}, Brownian motion in Banach spaces \cite{BMBanach}, L\'evy processes \cite{SchillingLevy2, SchillingLevy1, LevyMarkov}, L\'evy diffusions \cite{LiangSchillingWang}, and more.
    The problem of tail coupling also introduces subtle measurability issues, which have become an interesting question in their own right \cite{HsuSturm, BurdzyKendall}.
    One perspective on the present work is that we provide a refined understanding of a simple version of the dual problem of \textit{germ coupling}, wherein one asks whether two different processes started from the same starting points can be coupled to be initially equal almost surely.
    We believe that a rich avenue for future work is to study the germ coupling problem for other classes of processes of interest, like those outlined above.

    Another related area of research is the study of small-time similarity for stochastic processes.
    For example, a very well-studied notion of small-time similarity is that of ``separating times'' appearing in \cite{SepTimeI, SepTimeII} and another is the result that any stochastic process absolutely continuous with respect to Brownian motion has a local weak limit of Brownian motion at the origin \cite[Lem.~4.3]{DauvergneSarkarVirag}.

    In addition to the thematic connections outlined above, we also have a few explicit precedents for this work as mentioned in the introduction.
    Let us expand on them more fully.

    First is \cite{CouplingDuality} by the second author which studies an abstract question of coupling theory by establishing a form of Kantorovich duality for topologically irregular cost functions.
    As a consequence of the more general theory therein, the following is deduced:
    If $\mu_1,\mu_2$ are any probability measures on the space of c\`adl\`ag paths, then there exists a germ coupling $X_1,X_2$ of $\mu_1,\mu_2$ if and only if we have $\mu_1(A) = \mu_2(A)$ for all $A\in \mathcal{F}_{0+}$, where $\mathcal{F}_{0+}$ denotes the \textit{germ $\sigma$-algebra} on the space of c\`adl\`ag paths \cite[Prop.~2.2]{CouplingDuality}.
    (This correspondence explains the choice of the term \textit{germ coupling}, although there is also a fortuitous connection to the notion of \textit{germ} from algebraic geometry.)
    It is also proven \cite[Thm.~2.4]{CouplingDuality} that the law of a one-dimensional diffusion of the form $dX(t) = \mu(X(t))dt + \sigma(X(t))dB(t)$ admits a germ coupling to a standard Brownian motion if and only if we have $\sigma\equiv 1$ on some neighborhood of $X(0)$.
    While these results are pleasingly general, they are essentially non-constructive.

    Second is \cite{MEXIT} which studies maximal couplings of general stochastic processes.
    (For general processes, there is a notion of maximal coupling which need not be a germ coupling.)
    Their theoretical results are also augmented by a discussion of the statistical implications of the notion (lower bounds for hypothesis testing errors \cite[Prop.~1]{MEXIT}, convergence rates for adaptive Markov chain Monte Carlo \cite[Sect.~2.2]{MEXIT}, and more).
    In addition to many more calculations in discrete-time settings of interest, this work shows that maximal germ couplings of Brownian motions exist, and that $\Tfrag(B,B^{\theta})$, when $B,B^{\theta}$ are maximally germ coupled, is equal in distribution to $4\theta^{-2}Z^2$ where $Z$ is a standard Gaussian random variable \cite[Calculation~1]{MEXIT}.
    The authors provide two constructions of this: one involving a discrete pre-limit from biased random walks \cite[Sect.~5.2]{MEXIT}, and one involving Williams' version of It\^{o} excursion theory for Brownian motions with drift \cite[Thm.~29]{MEXIT}.
    Since these constructions were somewhat involved, we sought, in this work, a more explicit understanding of the structure of maximal germ couplings.

    The third precedent is recent work \cite{BusaniSH,SeppalainenSorensenSH} on the \textit{stationary horizon (SH)} which has been introduced  as a scaling limit of Busemann processes of several models in the Kardar-Parisi-Zhang (KPZ) universality class.
    The stationary horizon is, among other things, an uncountable collection of stochastic processes $\{B^{\theta}\}_{\theta\in \Rbb}$ such that $B^{\theta}$ is a (two-sided) Brownian motion started at 0 and with drift $\theta$ for each $\theta\in\Rbb$, such that $B^{\theta_1}$ and $B^{\theta_2}$ are germ coupled for all $\theta_1,\theta_2\in\Rbb$.
    However, there do not exist any $\theta_1,\theta_2\in\Rbb$ such that $B^{\theta_1}$ and $B^{\theta_2}$ are maximally germ coupled in the SH.
    The construction of the stationary horizon utilizes methods which can in fact be traced back to semi-classical work in queueing theory \cite{OConnellYor}.

    \section{Proofs}

    Before we get into the details, let us give a high-level overview of our method of proof.
    The key observation for all of the proofs is that the \textit{time-inversion} map $I:W\to W$ defined via $(I(w))(t) \coloneqq tw(\frac{1}{t})$ interchanges starting position and drift for Brownian motions; thus, the problem of constructing a \textit{maximal germ coupling} of Brownian motions with starting position zero and with different drifts turns out to be equivalent to the problem of constructing a \textit{minimal tail coupling} of Brownian motions with different starting positions and with zero drift.
    It is known that the so-called \textit{mirror coupling} is a canonical solution to the latter problem, so it only remains to study the image of such couplings under time inversion.
    See Figure~\ref{fig:inversion} for an illustration of this correspondence.

    \begin{figure}
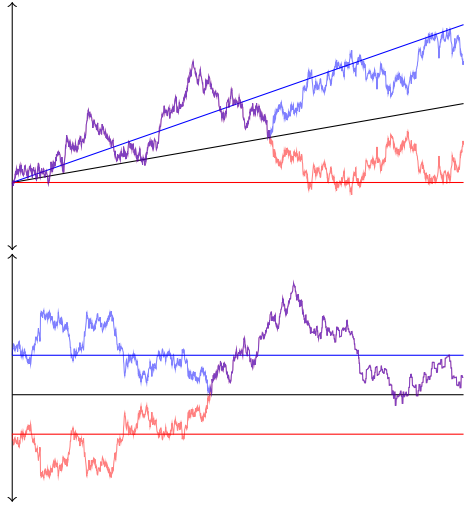

        \begin{center}
            \include{inversion_fig}
        \end{center}
        \vspace{-0.25cm}
        \caption{Using time inversion to restate the problem of constructing a maximal germ coupling as the dual problem of constructing a minimal tail coupling, which is solved by a mirror coupling.}
        \label{fig:inversion}
    \end{figure}

    To make this precise, we define the time-inversion map via
	\begin{equation*}
		I( (w(t): {t\ge 0}) ) \coloneqq (tw(1/t):\, t> 0).
	\end{equation*}
    As a first remark, note that, in general, the value $(I(w))(0)$ may not be well-defined.
    However, if we want the path $I(w)$ to be continuous, then it is necessary that the value $(I(w))(0)$ satisfies $(I(w))(0)=\lim_{u\downarrow 0}(I(w))(u) = \lim_{u\downarrow 0}uw(\frac{1}{u}) = \lim_{u\uparrow\infty}\frac{1}{u}w(u)$.
    Thus, it is natural to consider the restricted path space via
    \begin{equation*}
		W_{\infty}\coloneqq\left\{w\in W: \lim_{u\to\infty}\frac{w(u)}{u} \textrm{ exists}\right\}.
    \end{equation*}
    Then, we can define $I:W_{\infty}\to W_{\infty}$ via
    \begin{equation*}
        (I(w))(t)\coloneqq \begin{cases}
            tw\left(\frac{1}{t}\right), &\textrm{ if } t > 0, \\
            \lim_{u\to\infty}\frac{w(u)}{u},&\textrm{ if } t = 0.
        \end{cases}
    \end{equation*}
    for $w\in W_{\infty}$ and $t\ge 0$.
    It can be easily seen that $I^2(w) = w$ for all $w\in W_{\infty}$, which in other words says that $I$ is an involution on $W_{\infty}$.
    Also observe that the usual strong law of large numbers for Brownian motion implies $\BM(W_{\infty}) = 1$.

    A crucial property of the time-inversion map is that it interchanges the first meeting time with the last meeting time, up to a reciprocal.
	
	\begin{lemma}\label{lem:coal-frag-inversion}
		For any $w_1,w_2\in W_{\infty}$ with $\{s\ge 0: w_1(s) = w_2(s)\}\neq \emptyset$, we have
		\begin{equation*}
			\inf\left\{s\ge 0: (I(w_1))(s) = (I(w_2))(s)\right\} = \frac{1}{\sup\left\{s\ge 0: w_1(s) = w_2(s)\right\}}.
		\end{equation*}
        Here, we use the conventions $1/\infty = 0$ and $1/0 = \infty$.
	\end{lemma}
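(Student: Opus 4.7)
The plan is to establish the identity through an elementary reciprocal change of variables $u = 1/s$, which the definition of the time-inversion map $I$ makes essentially transparent.

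First, I would observe that for any $s > 0$, the formula $(I(w_i))(s) = s w_i(1/s)$ allows us to cancel the positive factor $s$ on both sides, yielding the equivalence $(I(w_1))(s) = (I(w_2))(s)$ if and only if $w_1(1/s) = w_2(1/s)$. Setting $E \coloneqq \{s > 0 : (I(w_1))(s) = (I(w_2))(s)\}$ and $F \coloneqq \{u > 0 : w_1(u) = w_2(u)\}$, this gives the set identity $E = \{1/u : u \in F\}$.

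Next, since the map $u \mapsto 1/u$ is an order-reversing bijection of $(0, \infty)$ onto itself, one immediately concludes $\inf E = 1/\sup F$, interpreted with the conventions $1/0 = \infty$ and $1/\infty = 0$. The relevant boundary behaviors are: if $F$ is unbounded above then $E$ accumulates at $0$, giving $\inf E = 0 = 1/\infty$; and if $F = \emptyset$ then $E = \emptyset$, giving $\inf E = \infty = 1/0$, each matching the claimed conventions.

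Finally, I would reconcile the $\{s > 0\}$ versions with the $\{s \ge 0\}$ formulation appearing in the statement. In the cases of interest, where $\sup\{s \ge 0 : w_1(s) = w_2(s)\} > 0$, the endpoint $u = 0$ does not affect the supremum, and the identity above transfers directly. The main obstacle is not mathematical depth but rather careful bookkeeping at the boundary values $0$ and $\infty$; the essential content is simply the order-reversing bijection $u \mapsto 1/u$, which faithfully encodes the duality between the germ of a path near $0$ and the tail of its time-inverted counterpart near $\infty$.
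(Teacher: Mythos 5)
Your proposal is correct and takes essentially the same route as the paper's proof: the paper likewise observes that for $u>0$ one has $(I(w_1))(1/u)=\frac{1}{u}w_1(u)=\frac{1}{u}w_2(u)=(I(w_2))(1/u)$, i.e.\ the reciprocal map puts the positive agreement sets in order-reversing bijection, and then dismisses the extremal cases briefly. Note only that both your write-up and the paper's are equally terse about the left-hand endpoint $s=0$, where $(I(w_1))(0)=(I(w_2))(0)$ means equality of asymptotic slopes rather than a reciprocal agreement time; this is the one boundary point that genuinely needs (and in the paper's applications receives, since $B$ has slope $0$ while $\ell^{\theta}$ has slope $\tfrac{1}{2}\theta\neq 0$) separate attention.
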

	
	\begin{proof}
        Note that $\{s\ge 0: w_1(s) = w_2(s)\}\neq \emptyset$ implies $\{s\ge 0: (I(w_1))(s) = (I(w_2))(s)\} \neq \emptyset$, so we do not need to worry about taking the infimum or supremum of an empty set.
        (That the converse is not true is related to the way that we define $(I(w))(0)$ for $w\in W_{\infty}$.
        Indeed, consider $w_1(t)\coloneqq 1$ and $w_2(t) \coloneqq -1$ for $t\ge 0$, so that $\{s\ge 0: w_1(s) = w_2(s)\} = \emptyset$.
        Since $\lim_{u\to\infty}\frac{1}{u}w_1(u) = \lim_{u\to\infty}\frac{1}{u}w_1(u) = 0$, we have $(I(w_1))(0) = (I(w_2))(0)$ hence $\{s\ge 0: (I(w_1))(s) = (I(w_2))(s)\} \neq \emptyset$.)
        If both $\{s\ge 0: w_1(s) = w_2(s)\}$ and $\{s\ge 0: (I(w_1))(s) = (I(w_2))(s)\}$ are non-empty, then the result follows for $0 < \sup\{s\ge 0: w_1(s) = w_2(s)\} < \infty$, simply by observing that for all $u\in\{s>0: w_1(s)=w_2(s)\}$ we have
        \begin{equation*}
			(I(w_1))\left(\frac{1}{u}\right) = \frac{1}{u}w_1(u) = \frac{1}{u}w_2(u) = (I(w_2))\left(\frac{1}{u}\right).
	\end{equation*}
        The extremal cases are handled similarly.
	\end{proof}

        Another crucial property of the time-inversion map is that it interchanges intercept and slope for lines.
        That is, if $w(t) = \theta t + \delta$ for $\theta,\delta\in\mathbb{R}$, then $(I(w))(t) = \delta t + \theta$.
        Interestingly, it has a similar effect on Brownian motions, for which it interchanges starting position and drift.
        To state this precisely, let us define for $\theta,\delta\in \Rbb$ the probability measure $\BM^{\theta}_{\delta}$ on $(W,\Wcal)$ to be the law of a Brownian motion started at $\delta$ and with drift~$\theta$.
        We also omit sub- or super-scripts of $0$ as before, so that, for example, $\BM^{\theta} \coloneqq \BM^{\theta}_0$ denotes the law of a Brownian motion started from $0$ and with drift $\theta\in\Rbb$.
	
	\begin{lemma}\label{lem:inversion-law}
		For any $\theta,\delta\in \Rbb$, we have $\BM^{\theta}_{\delta}\circ I^{-1} = \BM_{\theta}^{\delta}$.
	\end{lemma}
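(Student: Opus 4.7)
The plan is to reduce everything to the classical time-inversion invariance of standard Brownian motion. Let $X$ denote the canonical process on $(W,\Wcal)$ under $\BM^{\theta}_{\delta}$, so that we may write $X(t) = \delta + \theta t + B(t)$ for $t\ge 0$, where $B$ is a standard Brownian motion. My goal is to show that $I(X)$, pushed forward by $\BM^{\theta}_{\delta}$, equals $\BM^{\delta}_{\theta}$.

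First I would verify that $I(X)$ is well-defined. By the strong law of large numbers for standard Brownian motion, $B(u)/u\to 0$ almost surely as $u\to\infty$, so $X(u)/u = \delta/u + \theta + B(u)/u \to \theta$ almost surely. Thus $\BM^{\theta}_{\delta}(W_{\infty}) = 1$ and, in particular, $(I(X))(0) = \theta$ almost surely. For $t>0$, a direct computation gives
\begin{equation*}
(I(X))(t) \;=\; tX(1/t) \;=\; \theta + \delta t + tB(1/t).
\end{equation*}

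Next I would invoke the classical time-inversion property of Brownian motion: if $B$ is a standard Brownian motion, then the process $\tilde{B}$ defined by $\tilde{B}(t)\coloneqq tB(1/t)$ for $t>0$ and $\tilde{B}(0)\coloneqq 0$ is also a standard Brownian motion. Substituting this into the display above gives $(I(X))(t) = \theta + \delta t + \tilde B(t)$ for all $t\ge 0$, which is manifestly a Brownian motion started at $\theta$ with drift $\delta$. Hence the law of $I(X)$ under $\BM^{\theta}_{\delta}$ is $\BM^{\delta}_{\theta}$, which is the desired identity $\BM^{\theta}_{\delta}\circ I^{-1} = \BM^{\delta}_{\theta}$.

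The only real subtlety is the behavior at $t=0$: one must check that the definition $(I(w))(0) = \lim_{u\to\infty}w(u)/u$ agrees with the continuous extension of $tB(1/t)$ to $t=0$, which is exactly what the strong law of large numbers ensures. There are no genuinely hard steps; the proof is essentially a translation of the elementary identity $t(\delta + \theta/t) = \theta + \delta t$ (interchanging intercept and slope) combined with the standard time-inversion invariance of Brownian motion.
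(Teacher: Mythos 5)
Your proof is correct and takes essentially the same approach as the paper's: both write a Brownian motion with drift $\theta$ and start $\delta$ as $\delta + \theta t + B(t)$, apply $I$, invoke the classical time-inversion invariance of the standard Brownian motion part, and verify that the value at $t=0$ is $\theta$ via the strong law of large numbers. Your write-up is somewhat more explicit about the well-definedness of $I$ on a full-measure subset, but there is no substantive difference.
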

	
	\begin{proof}
		Let $B$ be a standard Brownian motion, so that the process $B_{\delta}^{\theta} \coloneqq( B(t) + \theta t + \delta: t\ge 0)$ has law $\BM^{\theta}_{\delta}$.
		Then, the process $I(B_{\delta}^{\theta})$ is given by
		\begin{equation*}
			(I(B_{\delta}^{\theta}))(t) = t\left(B(1/t)+\theta(1/t)+\delta\right) = tB(1/t) + \delta t + \theta
		\end{equation*}
		for all $t> 0$, and also we have
        \begin{equation*}
            (I(B_{\delta}^{\theta})(0) = \lim_{u\to\infty}\frac{B_{\delta}^{\theta}(u)}{u} = \lim_{u\to\infty}\frac{B(u) + \theta u + \delta}{u} = \lim_{u\to\infty}\frac{B(u)}{u} + \theta =\theta
        \end{equation*}
        almost surely.        
		Since $(tB(1/t): t> 0)$ itself has the law of a standard Brownian motion, we see that $I(B_{\delta}^{\theta})$ has law $\BM_{\theta}^{\delta}$.
	\end{proof}

        With these preliminaries we can now prove our main results.
    
	\begin{proof}[Proof of Theorem~\ref{thm:main}]
		Let $B$ be a Brownian motion.
		By Lemma~\ref{lem:coal-frag-inversion}, we have
		\begin{equation*}
			\inf\left\{s\ge 0: I(B)(s) =  \frac{1}{2}\theta\right\} = \frac{1}{\sup\left\{s\ge 0: B(s) = \frac{1}{2}\theta s\right\}}.
		\end{equation*}
		almost surely.
		Using these two representations, we see that 
		\begin{equation*}
			\begin{split}
				I(H^{\theta}(B))(t) &= t\left(H^{\theta}(B)\left(\frac{1}{t}\right)\right) \\
                    &=\begin{cases}
					tB\left(\frac{1}{t}\right), &\textrm{ if } \frac{1}{t} \le \sup\{s\ge 0: B(s) = \frac{1}{2}\theta s\}, \\
					t\left(\theta \cdot\frac{1}{t} - B(\frac{1}{t})\right), &\textrm{ else.}
				\end{cases} \\
				&= \begin{cases}
					I(B)(t), &\textrm{ if } t\ge \inf\left\{s\ge 0: I(B)(s) =  \frac{1}{2}\theta\right\}, \\
					\theta - I(B)(t), &\textrm{ else.}
				\end{cases}
			\end{split}
		\end{equation*}
		By the usual time-inversion symmetry, the process $I(B)$ has law $\BM$.
		Moreover, by translation symmetry, reflection symmetry, and the strong Markov property, it follows that the process $B_{\theta}\coloneqq I(H^{\theta}(B))$ has the law $\BM_{\theta}$.
		Therefore, Lemma~\ref{lem:inversion-law} implies that $B^{\theta}\coloneqq H^{\theta}(B)$ has law $\BM^{\theta}$. 
		This proves the first part of the theorem.
		
		Next, we claim that $\Tfrag(B^0,B^{\theta}) = \sup\{s\ge 0: B^0(s) = \frac{1}{2}\theta s\}$.
		To see the ``$\ge$'' direction, note by the definition of $H^{\theta}$ that $u\ge 0$ satisfying $u\le\sup\{s\ge 0: B^0(s) = \frac{1}{2}\theta s\}$ implies $B^0(u) = (H^{\theta}(B))(u)$.
		By contrapositive, this means $B^0(u) \neq B^{\theta}(u)$ implies $u>\sup\{s\ge 0: B^0(s) = \frac{1}{2}\theta s\}$.
		Taking the infimum over all such $u$, we conclude $\Tfrag(B^0,B^{\theta}) \ge \sup\{s\ge 0: B^0(s) = \frac{1}{2}\theta s\}$, as desired.
		The ``$\le$'' direction requires more care.
		First note that the definition of $H^{\theta}$ immediately gives
		\begin{equation}\label{eqn:main-1}
			\{\Tfrag(B^0,B^{\theta}) = \infty\} = \left\{\sup\left\{s\ge 0: B^0(s) = \frac{1}{2}\theta s\right\} = \infty\right\}.
		\end{equation}
		Second, observe that, by one-dimensionality, the events
		\begin{align*}
			A^< &\coloneqq \left\{B^0(u)<\frac{1}{2}\theta u \textrm{ for all } u > \sup\left\{s\ge 0: B^0(s) = \frac{1}{2}\theta s\right\}\right\} \\
			A^> &\coloneqq \left\{B^0(u)>\frac{1}{2}\theta u \textrm{ for all } u > \sup\left\{s\ge 0: B^0(s) = \frac{1}{2}\theta s\right\}\right\}
		\end{align*}
		satisfy
		\begin{equation}\label{eqn:main-2}
			A^<\cup A^> = \left\{\sup\left\{s\ge 0: B^0(s) = \frac{1}{2}\theta s\right\} < \infty\right\}.
		\end{equation}
		Thus, we have the following:
		On $A^<$, we have for all $u > \sup\{s\ge 0: B^0(s) = \frac{1}{2}\theta s\}$,
		\begin{equation*}
			B^{\theta}(u) = \theta u- B^0(u) > \theta u - \frac{1}{2}\theta u = \frac{1}{2}\theta u > B^0(u),
		\end{equation*}
		and, on $A^>$, we have for all $u > \sup\{s\ge 0: B^0(s) = \frac{1}{2}\theta s\}$
		\begin{equation*}
			B^{\theta}(u) = \theta u- B^0(u) < \theta u - \frac{1}{2}\theta u = \frac{1}{2}\theta u < B^0(u).
		\end{equation*}
		Consequently, on $A^<\cup A^>$, we can simply take the infimum to get
		\begin{equation*}
			A^{<}\cup A^{>}\subseteq\left\{\Tfrag(B^0,B^{\theta}) \le \sup\left\{s\ge 0: B^0(s) = \frac{1}{2}\theta s\right\} \right\}.
		\end{equation*}
		Combining this with \eqref{eqn:main-1} and \eqref{eqn:main-2}, we conclude $\Tfrag(B^0,B^{\theta}) \le \sup\{s\ge 0: B^0(s) = \frac{1}{2}\theta s\}$, as desired.
		Therefore, we have shown the needed $\Tfrag(B^0,B^{\theta}) = \sup\{s\ge 0: B^0(s) = \frac{1}{2}\theta s\}$.
		
		To complete the proof, we use the conclusion of preceding paragraph, the two given representations of $T$, the fact that $I(B^0)$ is a standard Brownian motion, and lastly the reflection principle, to get
		\begin{align*}
			\P(\Tfrag(B^0,B^{\theta})> t) &= \P\left(\sup\left\{s\ge 0: B^0(s) = \frac{1}{2}\theta s\right\}> t\right) \\
			&= \P\left(\frac{1}{\sup\left\{s\ge 0: B^0(s) = \frac{1}{2}\theta s\right\}}< \frac{1}{t}\right) \\
			&= \P\left(\inf\left\{s\ge 0: B_0(s) = \frac{1}{2}\theta\right\}< \frac{1}{t}\right) \\
			&= \P\left(\sup_{0 \le s \le \frac{1}{t}}B_0(s) > \frac{1}{2}\theta\right) \\
			&= \P\left(\left|B_0\left(\frac{1}{t}\right)\right| > \frac{1}{2}\theta\right) \\
			&= P\left(|Z|>\frac{1}{2}\theta\sqrt{t}\right) \\
			&= P\left(4\theta^{-2}Z^2>t\right)
		\end{align*}
		for all $t\ge 0$.
	\end{proof}

        \begin{proof}[Proof of Theorem~\ref{thm:bouquet-branch-rate}]
        We resume the setting of Theorem~\ref{thm:main}, where we define the processes $B^{\theta} \coloneqq H^{\theta}(B)$ and $B_{\theta} \coloneqq I(B^{\theta})$ for $\theta\ge 0$.
        As before, we use Lemma~\ref{lem:coal-frag-inversion} to get
	\begin{equation*}
		\inf\left\{s\ge 0: B_{0}(s) =  \frac{1}{2}\theta \right\} = \frac{1}{\sup\left\{s\ge 0: B^0(s) = \frac{1}{2}\theta s\right\}}
	\end{equation*}
        for all $\theta \ge 0$, and we also recall that we have shown
        \begin{equation*}
            \Tfrag(B^0,B^{\theta}) = \sup\left\{s\ge 0: B^0(s) = \frac{1}{2}\theta s\right\}
        \end{equation*}
        for all $\theta \ge 0$.
        It is well-known \cite[pg.~116]{RevuzYor} that the process $\{\Tcoal(B_0,B_{\theta}): \theta \ge 0\}$ defined via
        \begin{equation*}
            \Tcoal(B_0,B_{\theta}) \coloneqq \inf\left\{s\ge 0: B^{0}(s) =  \frac{1}{2}\theta \right\}
        \end{equation*}
        for all $\theta \ge 0$ is a $\frac{1}{2}$-stable subordinator, so combining these three displays proves the first claim.
        It immediately follows that $\{\Tfrag(B^0,B^{\theta}): \theta \ge 0\}$ is a non-increasing pure-jump process.
        \end{proof}

 	\begin{proof}[Proof of Theorem~\ref{thm:alg}]
 		The general strategy of the proof is to invoke Theorem~\ref{thm:main} in order to produce a version of $B^\theta$ that is maximally germ coupled to $B$ on $[0,T]$, but the difficulty is that the choice of $B^{\theta} \coloneqq H^{\theta}(B)$ depends on the entire trajectory of $B$ and not only the values $(B(t): 0 \le t \le T)$.
        Even though we do not have access to this entire trajectory, we make the following observations:
        On the event
        \begin{equation}\label{eqn:flip-event}
            \left\{\sup \left\{t\geq 0: \, B(t) = \frac{1}{2}\theta t\right\}> T \right\}
        \end{equation}
        we have $(B^{\theta}(t): 0 \le t \le T) = (B(t): 0 \le t \le T)$ and on its complement we have $(B^{\theta}(t): 0 \le t \le T) = H^{\theta}(B(t): 0 \le t \le T)$, where we have defined
        \begin{equation*}
		(H^{\theta}(w))(t) \coloneqq \begin{cases}
			w(t), &\textrm{ if } t \le \sup\{0 \le s \le T: w(s) = \frac{1}{2}\theta s\}, \\
			\theta t - w(t), &\textrm{ else.}
		\end{cases}
	\end{equation*}
        for $w\in W(T)$ and $0 \le t \le T$.
        It is clear that, in Algorithm~\ref{alg:germ-transform}, the ``if'' clause beginning in line 5 implements the former case, and the ``else'' clause beginning in line 7 implements the latter case; moreover, observe that each case requires only one pass through the elements of $(B(t): 0 \le t \le T)$.
        Therefore, it suffices to show that the conditional probability of \eqref{eqn:flip-event} given $(B(t): 0 \le t \le T)$ is equal to $\min\{\exp(\theta B(T) - \frac{1}{2}\theta^2T),1\}$.

        To do this, we define the process $\tilde{B}$ via $\tilde{B}(s)\coloneqq B(s+T)-B(T)$ for $s\ge 0$ and we do some simple rearranging to get:
        \begin{align*}
 		&\left\{\sup \left\{t\geq 0: \, B(t) = \frac{1}{2}\theta t\right\}> T \right\} \\
            &\qquad=\left\{\inf \left\{s>0: \, B(s+T) = \frac{1}{2}\theta (s+T)\right\} < \infty \right\} \\
            &\qquad=\left\{\inf \left\{s>0: \, \tilde{B}(s)-\frac{1}{2}\theta s = \frac{1}{2}\theta T-B(T)\right\} < \infty \right\} \\
            &\qquad=\left\{\sup_{s>0}\left(\tilde{B}(s)-\frac{1}{2}\theta s\right) \geq \frac{1}{2}\theta T - B(T) \right\}
 	\end{align*}
        almost surely.
        By the strong Markov property, the process $\{\tilde{B}(s)-\frac{1}{2}\theta s\}_{s\ge 0}$ has distribution $\BM^{-\theta/2}$ and is independent of $(B(t): 0 \le t \le T)$.
        Moreover, it is well-known \cite[Cor.~2(i)]{PitmanRogers} that, for $\theta > 0$, the global maximum $\sup_{s>0}(\tilde{B}(s)-\frac{1}{2}\theta s)$ has an exponential distribution with parameter $\theta$.
        Therefore, we have
        \begin{align*}
            &\P\left(\sup \left\{t\geq 0: \, B(t) = \frac{1}{2}\theta t\right\}> T \,\Big|\, B(t) : 0 \le t \le T\right) \\
            &=\P\left(\sup_{s>0}\left(\tilde{B}(s)-\frac{1}{2}\theta s\right) \geq \frac{1}{2}\theta T - B(T) \,\Big|\, B(t) : 0 \le t \le T\right) \\
            &= \begin{cases}
                \exp\left(-\theta \left(\frac{1}{2}\theta T - B(T)\right)\right), &\textrm{ if } B(T) < \frac{1}{2}\theta T, \\
                1, &\textrm{ if } B(T) > \frac{1}{2}\theta T, \\
            \end{cases} \\
            &= \min\left\{\exp\left(\theta B(T) - \frac{1}{2}\theta^2T\right),1\right\}.
        \end{align*}
        as claimed.
 	\end{proof}

        \section*{Acknowledgments}
	       SH was funded by the Deutsche Forschungsgemeinschaft (DFG, German Research Foundation) -- Projektnummer 449823447.
            We also thank Yang Chu, Mehdi Ouaki, Jim Pitman, and Evan Sorensen for useful conversations.

	\nocite{*}
	\bibliography{BM_germ}
	\bibliographystyle{plain}
	
\end{document}